\newtheorem{theorem}{Theorem}
\newtheorem{lemma}{Lemma}
\newtheorem{proposition}{Proposition}
\newtheorem{remark}{Remark}
\newcommand{\mR}{\mathbb{R}}
\newcommand{\mC}{\mathbb{C}}
\newcommand{\mN}{\mathbb{N}}
\newcommand{\mE}{\mathbb{E}}
\newcommand{\mS}{\mathbb{S}}
\newcommand{\cH}{\mathcal{H}}
\newcommand{\cF}{\mathcal{F}}
\newcommand{\cP}{\mathcal{P}}
\newcommand{\cJ}{\mathcal{J}}
 \def\a{{\alpha}}
 \def\l{{\lambda}}
 \def\o{{\omega}}
 \def\la{{\langle}}
 \def\ra{{\rangle}}
\begin{document}

\title{The kernel of the radially deformed Fourier transform}

\author{H. De Bie\thanks{ E-mail: {\tt Hendrik.DeBie@ugent.be}}}

\date{\small{Clifford Research Group}\\
\small{Department of Mathematical Analysis\\
Faculty of Engineering and Architecture -- Ghent University\\ Krijgslaan 281, 9000 Gent,
Belgium}}

\maketitle

\begin{abstract}

The radially deformed Fourier transform, introduced in [S. Ben Sa\"id, T. Kobayashi and B. {\O}rsted, Laguerre semigroup and Dunkl operators, Compositio Math.], is an integral transform that depends on a numerical parameter $a \in \mR^{+}$. So far, only for $a=1$ and $a=2$ the kernel of this integral transform is determined explicitly. 

In the present paper, explicit formulas for the kernel of this transform are obtained when the dimension is even and $a = 2/n$ with $n \in \mN$. As a consequence, it is shown that the integral kernel is bounded in dimension 2.
\end{abstract}

\noindent
\textbf{MSC 2010 :} 42B10, 33C52\\
\noindent
\textbf{Keywords :} generalized Fourier transform, integral kernel, radially deformed Fourier transform

\section{Introduction}
\setcounter{equation}{0}
\label{intro}

Harmonic analysis in $\mR^{m}$ is governed by the following three operators
\[
\Delta := \sum_{i=1}^{m}\partial_{x_{i}}^{2},\qquad
|x|^{2} :=  \sum_{i=1}^{m}x_{i}^{2},\qquad
\mE := \sum_{i=1}^{m} x_{i} \partial_{x_{i}}
\]
with $\Delta$ the Laplace operator and $\mE$ the Euler operator. As observed in \cite{H, HT}, the operators $E = |x|^{2}/2$, $F =-\Delta/2$ and $H =\mE + m/2$ are invariant under $O(m)$ and generate the Lie algebra $\mathfrak{sl}_{2}$:
\[
\big[H,E\big] = 2E,\qquad \big[H,F\big] = -2F,\qquad \big[E,F\big] = H.
\]

Recently, there has been a lot of interest in other differential or difference operator realizations of $\mathfrak{sl}_2$ or other Lie 
(super)algebras. The focus is in particular on the generalized Fourier transforms that subsequently arise. We mention the Dunkl transform \cite{deJ}, various discrete Fourier transforms \cite{AW, J}, Fourier transforms in Clifford analysis \cite{MR2190678, H12, DBXu}, etc. For a more detailed review, we refer the reader to \cite{DBR}.

A hard problem in this context is to find explicit closed formulas for the integral kernel of the associated Fourier transforms. This paper is concerned with a partial solution of this problem for one of the most important new realizations of this type.

 The set up is as follows. It can be observed that the $\mathfrak{sl}_{2}$ relations also hold for the generalized operators $|x|^a$, $|x|^{2-a} \Delta$ and $\mE + \frac{a+m-2}{2}$, with $a >0$ a real parameter. One then has the following commutators
\begin{eqnarray*}
\left[|x|^{2-a}\Delta, |x|^a \right] &=& 2 a \, (\mE + \frac{a +m -2}{2})\\
\left[|x|^{2-a}\Delta, \mE + \frac{a+m-2}{2} \right] &=& a\, |x|^{2-a}\Delta\\
\left[|x|^a, \mE + \frac{a +m-2}{2}\right] &=& -a \,|x|^a.
\end{eqnarray*}
This was first observed, in the context of minimal representations, for $a=1$ in \cite{MR2134314, MR2401813} and subsequently generalized to arbitrary $a$ in \cite{Orsted2}.

\begin{remark}
In fact, the paper \cite{Orsted2} studied an even more general deformation, where the Laplace operator was replaced by the Dunkl Laplace operator (see e.g. \cite{MR1827871}). As in that case, there is no hope to find explicit closed formulas for the $a$-deformation, we restrict ourselves in this paper to the ordinary Laplace operator.
\end{remark}

The paper \cite{Orsted2} was mostly concerned with the study of the associated Hermite semigroup given by
\[
\cJ_{a}(\o):=e^{\frac{\o}{a} \left( |x|^{2-a}\Delta - |x|^a\right)}
\]
where $\o$ is a complex parameter satisfying $\Re{\o} \geq 0$. This semigroup was studied in great detail and in particular an integral operator expression was found where the kernel is given as a series expansion. An important tool was the construction of an eigenbasis for the hamiltonian $H_{a} =- \left( |x|^{2-a}\Delta - |x|^a\right)/a$. Putting, for $j, k \in \mN$ and $H_k^{(\ell)} $, $(\ell = 1, \ldots, \dim{\cH_k })$ a basis for $\cH_k:= (\ker{\Delta}) \cap \cP_{k}$,
\begin{equation}
\label{HermRad}
\phi_{j, k, \ell}^{a} := L_{j}^{\frac{m + 2k-2}{a}}\left(\frac{2}{a}|x|^a\right) H_k^{(\ell)} \, e^{-|x|^a/a},
\end{equation}
lengthy computations show
\[
H_{a} \phi_{j, k, \ell}^{a} = \left( \frac{m - 2}{a} + \frac{2k}{a}+2j+1\right) \phi_{j, k, \ell}^{a}.
\]
The set of functions $\{ \phi_{j, k, \ell}^{a}\}$ forms an orthogonal basis for the space $L_2(\mR^m,  |x|^{a-2}dx)$.

In order to keep the resulting formulas as simple as possible, we restrict ourselves from here on to the specific semigroup parameter $\o= i \pi /2$ (although our results easily extend to arbitrary $\omega$). This yields the so-called radially deformed Fourier transform
\[
\cF_{a} = e^{ \frac{i \pi (m+a-2)}{2a}} e^{\frac{i \pi}{2 a}(|x|^{2-a}\Delta - |x|^{a})},
\]
where a suitable normalization has been added to make the transform unitary. A series expansion of its integral kernel is given in the subsequent theorem which was obtained in \cite{Orsted2}.

\begin{theorem}
\label{thm_exp}
Put
\[
K_{a}^{m}(x,y) = a^{2 \lambda/a} \Gamma \left( \frac{2 \lambda+a}{a}\right)\sum_{k=0}^{\infty} e^{-\frac{i \pi k}{a}}  \frac{\lambda +k}{\lambda} z^{-\lambda} J_{\frac{2(k+ \lambda)}{a}}\left( \frac{2}{a} z^{a/2}\right) \; C_{k}^{\lambda}(w),
\]
with $\lambda =(m-2)/2$, $z = |x| |y|$ and $w = \la x, y \ra /z$. This series converges absolutely and uniformly on compact subsets and the integral transform 
\[
\cF_{a} (f)(y) = \frac{\Gamma(m/2)}{ \Gamma (\frac{2 \lambda+a}{a}) 2 a^{2\lambda/a} \pi^{m/2}} \int_{\mR^{m}} K_{a}(x,y) f(x)  |x|^{a-2}dx
\]
defined on the function space  $L_2(\mR^m, |x|^{a-2}dx)$  coincides with the operator\\ $\cF_{a} = e^{ \frac{i \pi (m+a-2)}{2a}} e^{\frac{i \pi}{2 a}(|x|^{2-a}\Delta - |x|^{a})}$ on the basis $\phi_{j, k, \ell}^{a}$:
\begin{equation}
\label{eigvals}
\cF_{a}  \left(\phi_{j, k, \ell}^{a}  \right)= e^{-i \pi (j + \frac{k}{a})}\phi_{j, k, \ell}^{a}.
\end{equation}
\end{theorem}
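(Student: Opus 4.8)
The plan is to verify the two assertions separately: first the analytic claim that the series defining $K_a^m(x,y)$ converges absolutely and uniformly on compact subsets of $\mR^m \times \mR^m$, and then the operator-theoretic claim that the integral transform with this kernel acts on the basis $\phi_{j,k,\ell}^a$ with the stated eigenvalues $e^{-i\pi(j+k/a)}$. For the convergence, I would estimate the general term using standard bounds: the Gegenbauer polynomials satisfy $|C_k^\lambda(w)| \le C_k^\lambda(1) = \binom{k+2\lambda-1}{k}$ for $|w|\le 1$, which grows polynomially in $k$; the Bessel function $J_\nu$ with large order $\nu = 2(k+\lambda)/a$ and bounded argument decays super-exponentially, since $J_\nu(t) \sim (t/2)^\nu / \Gamma(\nu+1)$ for fixed $t$ and $\nu \to \infty$. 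Combining these with Stirling's formula for the $\Gamma$-factors shows the $k$-th term is bounded by something like $C^k / \Gamma(2k/a)$ uniformly for $z$ in a compact set, giving absolute and uniform convergence. The factor $z^{-\lambda} J_{2(k+\lambda)/a}(\tfrac{2}{a} z^{a/2})$ is in fact entire in a suitable sense (it is $z^{k}$ times an entire function of $z^{a/2}$ after extracting the leading power), so there is no singularity at $z = 0$.

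For the operator identity, the key step is to compute the action of the integral operator on each basis element $\phi_{j,k,\ell}^a$ and match it with $\cF_a$. Here I would exploit the fact that both $\cF_a$ and the integral operator are built to be $O(m)$-equivariant and that the radial and spherical parts separate. Concretely, I would plug $\phi_{j,k,\ell}^a(x) = L_j^{(m+2k-2)/a}(\tfrac{2}{a}|x|^a) H_k^{(\ell)}(x) e^{-|x|^a/a}$ into the integral, pass to polar coordinates $x = r\xi$, and use the Funk--Hecke theorem (or its Gegenbauer-expansion form) to integrate over the sphere: the spherical harmonic $H_k^{(\ell)}$ of degree $k$ picks out precisely the $C_k^\lambda$ term in the series, reducing the angular integral to a constant times $|y|^k H_k^{(\ell)}(y/|y|) \cdot (\text{something radial})$. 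What remains is a one-dimensional radial integral involving a Laguerre polynomial, a Gaussian-type weight $e^{-r^a/a}$, a power of $r$, and a single Bessel function $J_{2(k+\lambda)/a}$. This is a classical Hankel-transform-type integral whose value is a known formula: the Hankel transform of $t^\alpha L_j^{(\alpha)}(t^2) e^{-t^2/2}$-type functions reproduces the same Laguerre--Gaussian profile up to the phase $(-1)^j = e^{-i\pi j}$, and the additional $e^{-i\pi k/a}$ in front of the $C_k^\lambda$ term supplies the remaining factor $e^{-i\pi k/a}$. Assembling these gives exactly $e^{-i\pi(j+k/a)}\phi_{j,k,\ell}^a(y)$.

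Finally, to conclude that this integral operator really \emph{is} $\cF_a = e^{i\pi(m+a-2)/(2a)} e^{i\pi(|x|^{2-a}\Delta - |x|^a)/(2a)}$, I would invoke that $\{\phi_{j,k,\ell}^a\}$ is an orthogonal basis of $L_2(\mR^m,|x|^{a-2}dx)$ (stated in the excerpt) together with the fact, recalled before the theorem, that $H_a \phi_{j,k,\ell}^a = (\tfrac{m-2}{a} + \tfrac{2k}{a} + 2j + 1)\phi_{j,k,\ell}^a$; hence $\cF_a = e^{i\pi(m+a-2)/(2a)} e^{-i\pi H_a/2}$ is diagonal on this basis with eigenvalue $e^{i\pi(m+a-2)/(2a)} e^{-i\pi(\tfrac{m-2}{a}+\tfrac{2k}{a}+2j+1)/2} = e^{-i\pi(j+k/a)}$, which matches what the integral operator does. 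Since a bounded operator on a Hilbert space is determined by its values on an orthogonal basis, the two coincide.

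I expect the main obstacle to be the radial Hankel-type integral: one must identify the correct normalization of the Bessel order and the Laguerre index so that the self-reproducing identity applies cleanly, and keep careful track of the powers of $2/a$, the $\Gamma$-factors, and the overall constant $\Gamma(m/2)/(\Gamma(\tfrac{2\lambda+a}{a}) 2 a^{2\lambda/a}\pi^{m/2})$ so that the eigenvalue comes out exactly $e^{-i\pi(j+k/a)}$ with no stray constant. The Funk--Hecke reduction and the convergence estimate are comparatively routine; it is the bookkeeping in the radial integral, and checking that the chosen normalization is consistent across \emph{all} $j,k$ simultaneously, where the real work lies.
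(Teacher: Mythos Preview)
Your proposal is correct and follows essentially the same route as the paper's own discussion: the paper does not give a self-contained proof but outlines precisely the two ingredients you describe, namely the Hankel-type integral identity
\[
\int_{0}^{+\infty} r^{\alpha+1}  J_{\alpha}(rs)\,  L_{j}^{\alpha}(r^{2}) e^{-r^{2}/2}\,dr = (-1)^{j}s^{\alpha} L_{j}^{\alpha}(s^{2}) e^{-s^{2}/2}
\]
together with the reproducing-kernel property of the Gegenbauer polynomials for spherical harmonics (your Funk--Hecke step), while the convergence claim is deferred to Lemma~4.17 of \cite{Orsted2}. Your sketch of the convergence estimate via the super-exponential decay of $J_\nu$ for large order is more explicit than what the paper provides, but otherwise the strategies coincide.
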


Formally, this theorem can be obtained by combining the integral identity (see \cite[exercise 21, p. 371]{Sz})
\[
\int_{0}^{+\infty} r^{\alpha+1}  J_{\a}(rs)\,  L_{j}^{\a}(r^{2}) e^{-r^{2}/2}dr = (-1)^{j}s^{\a} L_{j}^{\a}(s^{2}) e^{-s^{2}/2}
\]
with the fact that the Gegenbauer polynomial $C_k^\lambda$ with $\l= (m-2)/2$ yields the reproducing kernel for the space of 
spherical harmonics of degree $k$ (cf. \cite{V}). This means that for $\xi, \eta \in \mS^{m-1}$
\[
\frac{\lambda+k}{\l} \int_{\mS^{m-1}} C^{\lambda}_{k}(\langle \xi,\eta \rangle) H_{\ell}(\xi) d \sigma(\xi) = \sigma_m \, \delta_{k \ell} \, H_{\ell}(\eta), \qquad H_{\ell} \in \cH_{\ell}
\]
with $\sigma_m = 2 \pi^{m/2}/\Gamma(m/2)$. The absolute and uniform convergence of the series is established in Lemma 4.17, \cite{Orsted2}.

Note that for $a=2$, the kernel $K_{a}(x,y)$ reduces to the usual exponential kernel of the ordinary Fourier transform:
\begin{align*}
K_{2}^{m}(x,y) &= 2^{\lambda} \Gamma(\lambda)\sum_{k=0}^{\infty}(-i)^{k}(k+ \lambda)  z^{-\lambda} J_{k+ \lambda}(z) \; C_{k}^{\lambda}(w)\\
&= e^{-i\la x, y \ra},
\end{align*}
see \cite[Section 11.5]{W}).

Also when $a=1$, a closed form is known, given by
\begin{equation}
\label{kernela1}
K_{1}^{m}(x,y) = \Gamma \left(\frac{m-1}{2}\right)  \widetilde{J}_{\frac{m-3}{2}} \left( \sqrt{2 (|x| |y| + \la x, y \ra )} \right),
\end{equation}
with $\widetilde{J}_{\nu}(z) = (z/2)^{-\nu}J_{\nu}(z)$. This result was announced in \cite{MR2134314} and proven in \cite{MR2401813} using a rather cumbersome geometric construction.

For arbitrary $a$ such a closed form is not available. Moreover, there are no bounds known on $K_{a}(x,y)$ for $a \neq 1$ or $\neq2$. Also a characterization of the kernel $K_{a}(x,y)$ as the unique eigenfunction of a system of PDEs is not known.

The strategy we will follow in this paper to determine an explicit formula for the series expansion in Theorem \ref{thm_exp} depends on two essential steps:
\begin{itemize}
\item find a recursion property on the dimension
\item use a trick to find the explicit formula in dimension 2.
\end{itemize}
Using these two steps, we are able to find an explicit formula for the kernel of the radially deformed Fourier transform for $a=2/n$ with $n \in \mN$.

\begin{remark}
After this paper was finished, the author was pointed to results of N. Demni in \cite{ND}. There, a closed formula is obtained for the generalized Bessel function (in the Dunkl sense) related to dihedral groups. In his computation, similar formulas appear as in the present paper, in particular in Lemma 2.1 and Corollary 2.2.
\end{remark}

The paper is organized as follows. In Section \ref{red} we describe the method to reduce the dimension. In Section \ref{secdim2} we determine the kernel explicitly in dimension 2, whenever $a= 2/n$. Finally, in Section \ref{kerndimeven} we find explicit formulas for the kernel in all even dimensions.

\section{Reduction of dimension}
\setcounter{equation}{0}
\label{red}

Put
\begin{equation}
\label{KernelAFT}
K_{a}^{m}(z,w) = a^{2 \lambda/a} \Gamma \left( \frac{2 \lambda+a}{a}\right)\sum_{k=0}^{\infty} e^{-\frac{i \pi k}{a}}  \frac{\lambda +k}{\lambda} z^{-\lambda} J_{\frac{2(k+ \lambda)}{a}}\left( \frac{2}{a} z^{a/2}\right) \; C_{k}^{\lambda}(w)
\end{equation}
with $\lambda =(m-2)/2$.

In the following lemma, we find the relation between $K_{a}^{m}$ and $K_{a}^{m+2}$.

\begin{lemma}
\label{reckernel}
One has
\[
K_{a}^{m+2}(z,w) = e^{i\frac{\pi}{a}} a^{\frac{2}{a}} \frac{\Gamma \left( \frac{2 \l + a+2}{a}\right)}{2 (\l+1) \Gamma \left( \frac{2 \l + a}{a}\right)}z^{-1} \partial_w K_{a}^{m}(z,w)
\]
with $\lambda =(m-2)/2$.
\end{lemma}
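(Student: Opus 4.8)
The plan is to differentiate the defining series \eqref{KernelAFT} for $K_a^m$ term by term in $w$ and to recognise the outcome, after shifting the summation index, as the series for $K_a^{m+2}$. The only special-function input required is the classical derivative rule for Gegenbauer polynomials,
\[
\frac{d}{dw}C_k^{\l}(w) = 2\l\,C_{k-1}^{\l+1}(w),
\]
together with the convention $C_{-1}^{\l+1}=0$.

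Since the prefactor $z^{-\l}$ in \eqref{KernelAFT} does not depend on $w$, applying this rule kills the $k=0$ term; relabelling $k\mapsto k+1$, using $\frac{\l+k+1}{\l}\cdot 2\l = 2\big((\l+1)+k\big)$, and pulling the resulting phase $e^{-i\pi/a}$ out of the sum gives
\[
\partial_w K_a^m(z,w) = 2\,e^{-i\pi/a}\, a^{2\l/a}\,\Gamma\!\left(\frac{2\l+a}{a}\right) z^{-\l} \sum_{k=0}^{\infty} e^{-i\pi k/a}\,\big((\l+1)+k\big)\, J_{\frac{2(k+\l+1)}{a}}\!\left(\frac{2}{a}z^{a/2}\right) C_k^{\l+1}(w).
\]
The Bessel order $\frac{2(k+\l+1)}{a}$ and the polynomial $C_k^{\l+1}(w)$ are exactly those occurring in the series for $K_a^{m+2}$, whose Gegenbauer parameter is $\l+1=m/2$, and the coefficient $2\big((\l+1)+k\big)$ is $2(\l+1)$ times the coefficient $\frac{(\l+1)+k}{\l+1}$ appearing there. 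Solving the displayed identity for $K_a^{m+2}$ then amounts to dividing by $2(\l+1)\,e^{-i\pi/a}$ and multiplying by the ratio of the remaining scalar prefactors, namely $a^{2(\l+1)/a}/a^{2\l/a}=a^{2/a}$, $z^{-(\l+1)}/z^{-\l}=z^{-1}$, and $\Gamma\!\left(\frac{2\l+a+2}{a}\right)\big/\Gamma\!\left(\frac{2\l+a}{a}\right)$; this reproduces precisely the constant stated in the lemma (note that dividing by $e^{-i\pi/a}$ produces the factor $e^{i\pi/a}$).

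The only step that needs a genuine justification rather than a computation is the legitimacy of differentiating the series term by term in $w$. This holds because the differentiated series is, up to the explicit constant just identified, the series defining $K_a^{m+2}$, which by Theorem \ref{thm_exp} applied in dimension $m+2$ converges absolutely and uniformly on compact subsets; hence $\sum_k \partial_w(\cdots)$ does too, and termwise differentiation of \eqref{KernelAFT} is valid. Apart from this, I expect no real obstacle — the argument is a short direct calculation, and the only danger is a slip in tracking the phases, the powers of $a$ and $z$, the Gamma factors, and the index shift.
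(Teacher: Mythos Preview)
Your proof is correct and follows exactly the same approach as the paper's: differentiate the series \eqref{KernelAFT} term by term using the Gegenbauer derivative rule $\frac{d}{dw}C_k^{\l}(w)=2\l\,C_{k-1}^{\l+1}(w)$, shift the index, and justify the interchange of sum and derivative via the absolute and uniform convergence of both $K_a^m$ and $K_a^{m+2}$ guaranteed by Theorem~\ref{thm_exp}. The paper's proof is terser, but your more detailed bookkeeping of the phase, the powers of $a$ and $z$, and the Gamma factors is accurate.
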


\begin{proof}
This follows immediately by applying the derivation formula for the Gegenbauer polynomials, see e.g.  \cite[(4.7.14)]{Sz} ,
\[
\frac{d}{d w} C_k^{\l}(w) = 2 \l C_{k-1}^{\l+1}(w)
\]
to formula (\ref{KernelAFT}). Derivation and summation can be interchanged, because both $K_{a}^{m}$ and $K_{a}^{m+2}$ converge absolutely and uniformly on compact subsets of $(z,w) \in \mR^+ \times [-1,1]$, see Theorem 1.
\end{proof}

This lemma allows us to reduce the problem of determining the kernel to dimension 2, resp. 3. Precise formulas are given in the following proposition.

\begin{proposition}
\label{recursionresult}
When the dimension $m = 2 n$ is even, the kernel of the radially deformed FT is given by
\[
K_{a}^{2n}(z,w) = \frac{e^{i (n-1)\frac{\pi}{a}} a^{\frac{2(n-1)}{a}}}{(2n-2)!!} \Gamma \left( \frac{2n-2+a}{a}\right)z^{-n+1} \partial_w^{n-1} K_a^{2}(z,w).
\]

When the dimension $m = 2 n+1$ is odd, the kernel of the radially deformed FT is given by
\[
K_{a}^{2n+1}(z,w) =  \frac{e^{i (n-1)\frac{\pi}{a}} a^{\frac{2(n-1)}{a}}}{(2n-1)!!} \frac{\Gamma \left( \frac{2n-1+a}{a}\right)}{\Gamma \left( \frac{1+a}{a}\right)}z^{-n+1} \partial_w^{n-1} K_a^{3}(z,w).
\]
\end{proposition}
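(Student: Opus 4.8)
The plan is to obtain Proposition \ref{recursionresult} by iterating Lemma \ref{reckernel}. The lemma gives a one-step relation $K_a^{m+2} = c_\lambda \, z^{-1} \partial_w K_a^m$ with an explicit constant $c_\lambda$ depending on $\lambda = (m-2)/2$; applying it $n-1$ times starting from $m=2$ (so $\lambda = 0$) reaches $m = 2n$, and starting from $m=3$ (so $\lambda = 1/2$) reaches $m = 2n+1$. The only genuine content is bookkeeping: tracking how the constants and the powers of $z$ telescope, and checking that the $z^{-1}$ factors commute appropriately past the $\partial_w$'s (they do, since $z$ and $w$ are independent variables).

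First I would set up the induction cleanly. Write $\lambda_j = (m_j - 2)/2$ for the successive dimensions $m_0, m_1, \dots$. In the even case $m_j = 2(j+1)$, so $\lambda_j = j$; in the odd case $m_j = 2j+3$, so $\lambda_j = j + 1/2$. One step of Lemma \ref{reckernel} contributes a factor
\[
e^{i\pi/a}\, a^{2/a}\, \frac{\Gamma\!\left(\frac{2\lambda_j + a + 2}{a}\right)}{2(\lambda_j+1)\,\Gamma\!\left(\frac{2\lambda_j+a}{a}\right)}\, z^{-1} \partial_w .
\]
Composing $n-1$ such steps produces $e^{i(n-1)\pi/a} a^{2(n-1)/a} z^{-(n-1)} \partial_w^{n-1}$ times a product of Gamma-ratios and a product $\prod (2(\lambda_j+1))^{-1}$. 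The Gamma product telescopes: since $2\lambda_{j+1} + a = 2\lambda_j + a + 2$, the numerator of each factor cancels the denominator of the next, leaving $\Gamma\!\left(\frac{2\lambda_{n-1} + a}{a}\right)\big/\Gamma\!\left(\frac{2\lambda_0 + a}{a}\right)$ — i.e. $\Gamma\!\left(\frac{2n-2+a}{a}\right)/\Gamma(1)$ in the even case and $\Gamma\!\left(\frac{2n-1+a}{a}\right)/\Gamma\!\left(\frac{1+a}{a}\right)$ in the odd case, matching the stated formulas.

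It remains to identify the product of the factors $2(\lambda_j+1)$ for $j = 0, \dots, n-2$. In the even case this is $\prod_{j=0}^{n-2} 2(j+1) = 2^{n-1}(n-1)! = (2n-2)!!$; in the odd case it is $\prod_{j=0}^{n-2} 2(j+3/2) = \prod_{j=0}^{n-2}(2j+3) = 3 \cdot 5 \cdots (2n-1) = (2n-1)!!$ (noting the missing factor $1$ is irrelevant). This reproduces the denominators $(2n-2)!!$ and $(2n-1)!!$ exactly. A one-line base case check ($n=1$ gives the identity $K_a^2 = K_a^2$, resp. $K_a^3 = K_a^3$, since all products are empty) completes the argument.

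There is no real obstacle here — the statement is a formal consequence of Lemma \ref{reckernel}, whose own justification (absolute and uniform convergence allowing term-by-term differentiation) has already been supplied. The only place to be careful is the telescoping of the Gamma-function constants and correctly reading off the empty-product conventions at $n=1$; I would present the iteration as a short induction on $n$ rather than writing out the $(n-1)$-fold composition explicitly, since the inductive step is precisely one application of Lemma \ref{reckernel} followed by simplifying $\Gamma\!\left(\frac{2\lambda+a+2}{a}\right) = \frac{2\lambda+a}{a}\,\Gamma\!\left(\frac{2\lambda+a}{a}\right)$ and $2(\lambda+1) = m$.
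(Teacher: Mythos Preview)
Your proposal is correct and follows exactly the paper's approach: the paper's proof is the single line ``Immediately, by repeated application of Lemma \ref{reckernel},'' and you have simply spelled out the telescoping of the Gamma factors and the double-factorial products that this repeated application entails. Your bookkeeping is accurate in both the even and odd cases.
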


\begin{proof}
Immediately, by repeated application of Lemma \ref{reckernel}.
\end{proof}

\begin{remark}
Note that, in the odd-dimensional case, we cannot reduce the problem to determining the kernel for $m=1$. Indeed, in that case, the expansion in formula (\ref{KernelAFT}) is no longer valid.
\end{remark}

\section{The case of dimension 2}
\setcounter{equation}{0}
\label{secdim2}

The kernel in dimension $m=2$ is given by the expression for $K_{a}^{m}(z,w)$ with $\l =0$. We obtain
\begin{align*}
K_{a}^{2}(z,w) &= \lim_{\l \rightarrow 0} K_{a}^{m}(z,w)\\
& = J_0 \left(\frac{2}{a} z^{a/2}  \right) +  \sum_{k=1}^{\infty} k \, e^{- \frac{i\pi k}{a}}  J_{\frac{2k}{a}} \left(\frac{2}{a} z^{a/2}  \right)\lim_{\lambda \rightarrow 0} \l^{-1} C_k^{\l}(w)\\
&=J_0 \left(\frac{2}{a} z^{a/2}  \right) + 2 \sum_{k=1}^{\infty} e^{- \frac{i\pi k}{a}}  J_{\frac{2k}{a}} \left(\frac{2}{a} z^{a/2}  \right) \cos{ k \theta}
\end{align*}
with $w = \cos{\theta}$.
Here we used the well-known relation \cite[(4.7.8)]{Sz} 
$$
\lim_{\lambda \rightarrow 0}  \lambda^{-1} C_k^\lambda (w) = (2/k) \cos k \theta, \quad w = \cos \theta, \quad k \geq 1.   
$$

We now find a closed formula for
\[
f_{a}(z,w)=J_0 \left(z \right) + 2 \sum_{k=1}^{\infty} e^{- \frac{i\pi k}{a}}  J_{\frac{2k}{a}} \left(z \right) \cos{ k \theta}, \qquad w = \cos \theta,
\]
whenever $a=2/n$ with $n \in \mN$. We start by proving an important auxiliary result.
\begin{lemma}
\label{auxlemma}
Let 
\[
f(t) = \sum_{k=0}^{+\infty} a_{k} \cos{k t}, \qquad a_{k} \in \mC
\]
be an absolutely convergent Fourier series. Then the series
\[
g(t) = \sum_{k=0}^{+\infty} a_{n k} \cos{k t}
\]
is given explicitly by
\[
g(t) = \frac{1}{n} \sum_{j=0}^{n-1} f \left( \frac{t + 2 \pi j}{n} \right).
\]
\end{lemma}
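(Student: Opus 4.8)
The plan is to exploit the discrete Fourier orthogonality relation on the group of $n$-th roots of unity. The key observation is that extracting every $n$-th coefficient from a Fourier series is a standard ``decimation'' operation, and the right-hand side $\frac{1}{n}\sum_{j=0}^{n-1} f\left(\frac{t+2\pi j}{n}\right)$ is precisely the averaging operator that implements it. So the strategy is to start from the claimed formula for $g$, substitute the series for $f$, interchange the (finite) sum over $j$ with the (absolutely convergent, hence rearrangeable) sum over $k$, and evaluate the inner sum.

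Concretely, I would write
\[
\frac{1}{n} \sum_{j=0}^{n-1} f\left(\frac{t+2\pi j}{n}\right) = \frac{1}{n} \sum_{j=0}^{n-1} \sum_{k=0}^{+\infty} a_k \cos\left(\frac{k(t+2\pi j)}{n}\right).
\]
Since $\sum_k |a_k| < \infty$ and $|\cos| \le 1$, the double sum converges absolutely, so I may swap the order of summation to get $\frac{1}{n}\sum_{k=0}^{+\infty} a_k \sum_{j=0}^{n-1} \cos\left(\frac{k(t+2\pi j)}{n}\right)$. The inner sum is the real part of $e^{ikt/n} \sum_{j=0}^{n-1} e^{2\pi i k j/n}$, and the geometric sum $\sum_{j=0}^{n-1} e^{2\pi i k j /n}$ equals $n$ when $n \mid k$ and $0$ otherwise. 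Hence only the terms with $k = n\ell$ survive, each contributing $\frac{1}{n}\cdot a_{n\ell}\cdot n\cos(\ell t) = a_{n\ell}\cos(\ell t)$, which is exactly $g(t)$.

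The only genuinely delicate point is the justification of the interchange of summation, and that is handled cleanly by absolute convergence of the Fourier series of $f$ together with the boundedness of cosine; everything else is the elementary root-of-unity filter. I would also remark that the resulting series for $g$ is itself absolutely convergent (with $|a_{n\ell}| \le |a_{n\ell}|$ trivially a subseries of $\sum|a_k|$), which is needed when this lemma is later applied termwise to the Bessel series defining $f_a(z,w)$. I do not anticipate any real obstacle here; the lemma is a packaging of a classical fact, and the proof is short.
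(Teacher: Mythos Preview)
Your proof is correct and rests on the same mechanism as the paper's: the root-of-unity filter $\sum_{j=0}^{n-1} e^{2\pi i k j/n} = n\,\delta_{n\mid k}$. The paper first reduces to real $a_k$, passes to the exponential series $F(t)=\sum a_k e^{ikt}$, decomposes it by residue classes mod $n$, and averages over the shifts to isolate $F_0(t/n)$ before taking real parts; your direct substitution-and-swap is a slightly more streamlined packaging of the same computation.
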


\begin{proof}
By decomposing the formula to be proven into real and imaginary parts, we observe that it suffices to prove the statement for $a_{k}$ real. Under this assumption, consider the function
\[
F(t) = \sum_{k=0}^{+\infty} a_{k} e^{ i k t},
\]
which satisfies $\Re{F(t)} = f(t)$.
This function can be decomposed as
\[
F(t) = \sum_{j=0}^{n-1}  F_{j}(t)
\]
with
\[
F_{j} (t) = \sum_{k=0}^{\infty} a_{n k +j} e^{i (nk+j) t}.
\]
Observe that $F_{j}(t + \frac{2 \pi \ell}{n})= e^{i \frac{2 \pi j \ell}{n}} F_{j}(t)$. Next we calculate
\begin{align*} 
\frac{1}{n} \sum_{\ell=0}^{n-1} F\left( \frac{t + 2 \pi \ell}{n} \right) &= \frac{1}{n} \sum_{\ell=0}^{n-1}  \sum_{j=0}^{n-1}  F_{j}\left( \frac{t + 2 \pi \ell}{n} \right)\\
&=\frac{1}{n}  \sum_{j=0}^{n-1} \sum_{\ell=0}^{n-1}  e^{i \frac{2 \pi j \ell}{n}}  F_{j}\left( \frac{t }{n} \right)\\
&= F_{0}\left( \frac{t }{n} \right).
\end{align*}
In the last step, we used the fact that
\[
\sum_{\ell=0}^{n-1}  e^{i \frac{2 \pi j \ell}{n}} = \left\{ \begin{array}{l} n, \quad j=0\\0, \quad j \in \{1, \ldots, n-1\}  \end{array} \right.
\]

The proof is now completed by observing that $g(t) = \Re F_{0}\left( \frac{t }{n} \right)$.
\end{proof}

We can now give a closed formula for the kernel when the parameter $a =2/n$, with $n \in \mN$.
\begin{theorem}
\label{ClosedForm}
A closed formula for the series
\[
f_{\frac{2}{n}}(z,t)=J_0 \left(z \right) + 2 \sum_{k=1}^{\infty} e^{- \frac{i\pi k n}{2}}  J_{ k n} \left(z \right) \cos{ k t}
\]
is given by
\[
f_{\frac{2}{n}}(z,t) = \frac{1}{n} \sum_{\ell=0}^{n-1}e^{-i z \cos{\frac{t+2 \pi \ell}{n}}}.
\]
\end{theorem}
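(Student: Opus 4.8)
The plan is to recognize that the series defining $f_{2/n}(z,t)$ is exactly the image of a known absolutely convergent Fourier series under the sublattice-averaging operation of Lemma \ref{auxlemma}, and then to evaluate that averaging explicitly. First I would recall the classical Jacobi--Anger expansion
\[
e^{-iz\cos\varphi} = \sum_{k=-\infty}^{+\infty} (-i)^{k} J_{k}(z) e^{ik\varphi} = J_0(z) + 2\sum_{k=1}^{\infty} (-i)^{k} J_{k}(z) \cos k\varphi,
\]
using $J_{-k}=(-1)^k J_k$ and $(-i)^{-k}=(-i)^{k}(-1)^{k}$ to fold the negative indices. Thus, setting
\[
f(\varphi) = e^{-iz\cos\varphi} = \sum_{k=0}^{\infty} a_k \cos k\varphi, \qquad a_0 = J_0(z), \quad a_k = 2(-i)^k J_k(z)\ (k\geq1),
\]
we have an absolutely convergent Fourier series (absolute convergence in $k$ is immediate from the decay of Bessel functions, e.g. $|J_k(z)| \leq (|z|/2)^k/k!$ for fixed $z$), so Lemma \ref{auxlemma} applies.

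Next I would compute the coefficients of $g(t) = \sum_{k\geq0} a_{nk}\cos kt$: we get $a_{n\cdot 0} = J_0(z)$ and, for $k\geq1$, $a_{nk} = 2(-i)^{nk} J_{nk}(z) = 2 e^{-i\pi nk/2} J_{nk}(z)$, which is precisely the $k$-th coefficient in $f_{2/n}(z,t)$. Hence $f_{2/n}(z,t) = g(t)$, and Lemma \ref{auxlemma} gives
\[
f_{2/n}(z,t) = g(t) = \frac{1}{n}\sum_{\ell=0}^{n-1} f\!\left(\frac{t+2\pi\ell}{n}\right) = \frac{1}{n}\sum_{\ell=0}^{n-1} e^{-iz\cos\frac{t+2\pi\ell}{n}},
\]
which is the claimed formula.

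There is essentially no hard step here; the main thing to be careful about is the bookkeeping with the powers of $-i$ when folding the bilateral Jacobi--Anger series into a cosine series, and checking that $(-i)^{nk} = e^{-i\pi nk/2}$ matches the normalization in the statement. I would also note explicitly that Lemma \ref{auxlemma} was stated for real coefficients only via a reduction to real and imaginary parts, and that $a_k = 2(-i)^k J_k(z)$ is genuinely complex in general, so one invokes the lemma coefficient-by-coefficient on $\Re a_k$ and $\Im a_k$ separately (or simply observes that the proof of Lemma \ref{auxlemma} via the function $F$ goes through verbatim for complex $a_k$, since the decomposition $F = \sum_j F_j$ and the root-of-unity summation never used reality). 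A final remark worth making: this gives, combined with Proposition \ref{recursionresult} and the limit computation of $K_a^2$ in terms of $f_a$, an explicit closed form for $K_{2/n}^{2p}(z,w)$ in every even dimension by differentiating $\frac{1}{n}\sum_\ell e^{-iz\cos\frac{\theta+2\pi\ell}{n}}$ repeatedly in $w=\cos\theta$; and in dimension $2$ itself, since each summand has modulus $1$, one reads off the bound $|K_{2/n}^2(z,w)| \leq 1$.
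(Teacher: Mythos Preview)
Your proof is correct and follows exactly the same approach as the paper: start from the Jacobi--Anger expansion $e^{-iz\cos\varphi}=J_0(z)+2\sum_{k\ge1}(-i)^kJ_k(z)\cos k\varphi$, identify the coefficients, and apply Lemma~\ref{auxlemma} to extract the sublattice $k\in n\mathbb{Z}$. Your additional remarks (the folding of the bilateral series, the absolute-convergence check via $|J_k(z)|\le(|z|/2)^k/k!$, and the handling of complex coefficients in Lemma~\ref{auxlemma}) are all sound and simply fill in details the paper leaves implicit; the consequences you mention at the end are indeed what the paper records in the subsequent theorem and in Section~\ref{kerndimeven}.
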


\begin{proof}
The following expansion is well-known:
\begin{equation}
\label{FTdecomp}
e^{-i z \cos{t}} = J_{0}(z) +2 \sum_{k=1}^{\infty} (-i)^{k}  J_{k} \left(z \right) \cos{ k t},
\end{equation}
see e.g. \cite{W}, Section 11.5.  Application of Lemma \ref{auxlemma} to formula (\ref{FTdecomp}) then yields the result.
\end{proof}

In the following theorem, we give the kernel of the radially deformed Fourier transform in dimension two when $a =2/n$ and obtain its boundedness.
\begin{theorem}
Let $a =2/n$, with $n \in \mN$. Then the kernel
\[
K_{a}^{2}(z,w) =J_0 \left(\frac{2}{a} z^{a/2}  \right) + 2 \sum_{k=1}^{\infty} e^{- \frac{i\pi k}{a}}  J_{\frac{2k}{a}} \left(\frac{2}{a} z^{a/2}  \right) \cos{ k \theta}
\]
with $w = \cos{\theta}$
is given in closed form by
\[
K_{\frac{2}{n}}^{2}(z,w) = \frac{1}{n} \sum_{\ell=0}^{n-1}e^{-i nz^{\frac{1}{n}} \cos{\frac{t}{n}}\cos{\frac{2 \pi \ell}{n}}}e^{i nz^{\frac{1}{n}} \sin{\frac{t}{n}}\sin{\frac{2 \pi \ell}{n}}}, \qquad t= \arccos{w}
\]
and satisfies the bound
\[
| K_{\frac{2}{n}}^{2}(z,w)| \leq 1.
\]
\end{theorem}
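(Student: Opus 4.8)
The plan is to obtain the claimed formula directly from Theorem \ref{ClosedForm} by a change of variable, and then to read off the bound from the triangle inequality. First I would match the two series: comparing the definition of $K_a^2(z,w)$ with that of $f_a(z,t)$ shows that $K_a^2(z,w)=f_a\bigl(\tfrac2a z^{a/2},\,t\bigr)$ with $t=\arccos w$, i.e.\ the kernel arises from $f_a$ by the substitution $z\mapsto\tfrac2a z^{a/2}$. For $a=2/n$ one has $\tfrac2a=n$ and $z^{a/2}=z^{1/n}$, so $K_{2/n}^2(z,w)=f_{2/n}(n z^{1/n},t)$. Since $z=|x||y|\ge 0$ and $w\in[-1,1]$, the argument $n z^{1/n}$ is a nonnegative real and $t\in[0,\pi]$, so Theorem \ref{ClosedForm} applies with its argument $z$ replaced by $n z^{1/n}$ and gives
\[
K_{2/n}^2(z,w)=\frac1n\sum_{\ell=0}^{n-1} e^{-i\,n z^{1/n}\cos\frac{t+2\pi\ell}{n}}.
\]

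Next I would expand the phase using the addition formula $\cos\frac{t+2\pi\ell}{n}=\cos\frac tn\cos\frac{2\pi\ell}{n}-\sin\frac tn\sin\frac{2\pi\ell}{n}$ and factor each exponential as $e^{-i n z^{1/n}\cos\frac tn\cos\frac{2\pi\ell}{n}}\,e^{i n z^{1/n}\sin\frac tn\sin\frac{2\pi\ell}{n}}$, which is exactly the stated closed form. For the bound, I would note that since $z\ge 0$ and $t\in[0,\pi]$ are real, the exponent $-i\,n z^{1/n}\cos\frac{t+2\pi\ell}{n}$ is purely imaginary for every $\ell$, so each summand in the first display has modulus $1$; the triangle inequality then yields $|K_{2/n}^2(z,w)|\le\frac1n\sum_{\ell=0}^{n-1}1=1$.

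All of this is routine, and the only point that deserves a word of care is the admissibility of the substitution $z\mapsto n z^{1/n}$ in Theorem \ref{ClosedForm}; this is unproblematic, since that result (through Lemma \ref{auxlemma} and the expansion (\ref{FTdecomp})) is valid for every real value of the argument, and the Fourier series in question is absolutely convergent for any such value because of the rapid decay of $J_{kn}$ in its order. So there is no genuine obstacle: the substantive work was carried out in Section \ref{secdim2}, and the present theorem is essentially a corollary that rewrites the kernel in the variables $(z,w)$ and extracts the boundedness conclusion.
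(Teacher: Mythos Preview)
Your proposal is correct and follows essentially the same route as the paper: both substitute $z\mapsto n z^{1/n}$ in Theorem~\ref{ClosedForm}, then expand $\cos\frac{t+2\pi\ell}{n}$ via the addition formula to reach the stated form, and finally observe that each summand has unit modulus so the triangle inequality gives the bound. The only difference is that you spell out the identification $K_a^2(z,w)=f_a\bigl(\tfrac{2}{a}z^{a/2},t\bigr)$ and the justification of the bound more explicitly than the paper does.
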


\begin{proof}
Using Theorem \ref{ClosedForm}, we obtain, with $t= \arccos{w}$,
\begin{align*}
K_{\frac{2}{n}}^{2}(z,w) &= \frac{1}{n} \sum_{\ell=0}^{n-1}e^{-i nz^{\frac{1}{n}} \cos{\frac{t+2 \pi \ell}{n}}}\\
&= \frac{1}{n} \sum_{\ell=0}^{n-1}e^{-i nz^{\frac{1}{n}} \cos{\frac{t}{n}}\cos{\frac{2 \pi \ell}{n}}}e^{i nz^{\frac{1}{n}} \sin{\frac{t}{n}}\sin{\frac{2 \pi \ell}{n}}}.
\end{align*}
The bound given in the statement of the theorem now follows immediately.
\end{proof}

We can simplify these formulas even further, depending on the parity of $n$. When $n=2 k$ is even, we obtain, using basic trigonometric identities
\[
K_{\frac{1}{k}}^{2}(z,w) = \frac{1}{k} \sum_{\ell=0}^{k-1} \cos{\left( nz^{\frac{1}{n}} \cos{\frac{t}{n}}\cos{\frac{ \pi \ell}{k}}\right)}\cos{\left(nz^{\frac{1}{n}} \sin{\frac{t}{n}}\sin{\frac{\pi \ell}{k}}\right)}.
\]
In other words, the kernel is a real-valued function. This was to be expected from formula (\ref{eigvals}), as the eigenvalues of the radially deformed FT are all real in this case. When $k=1$, the formula for the kernel reduces to
\begin{align*}
K_{1}^{2}(z,w) &= \cos{\left( 2 z^{\frac{1}{2}} \cos{\frac{t}{2}}\right)}\\
&=  \cos{\left( \sqrt{2 (z+z w )}\right)},
\end{align*}
which is in correspondence with formula (\ref{kernela1}) for $m=2$. Note that such expressions do not exist for $k>1$, as this would require simple algebraic formulas expressing $\cos{(t/2k)}$ in terms of $\cos{t}$.

When $n=2 k+1$ is odd, we obtain in a similar way
\begin{align*}
K_{\frac{2}{2k+1}}^{2}(z,w) &=\frac{1}{2k+1}\cos{\left( nz^{\frac{1}{n}} \cos{\frac{t}{n}}\right)} \\
&  + \frac{2}{2k+1} \sum_{\ell=1}^{k} \cos{\left( nz^{\frac{1}{n}} \cos{\frac{t}{n}}\cos{\frac{2 \pi \ell}{2k+1}}\right)}\cos{\left(nz^{\frac{1}{n}} \sin{\frac{t}{n}}\sin{\frac{2\pi \ell}{2k+1}}\right)}\\
&- \frac{i}{2k+1}  \sin{\left( nz^{\frac{1}{n}} \cos{\frac{t}{n}}\right)} \\
&- \frac{2i}{2k+1} \sum_{\ell=1}^{k} \sin{\left( nz^{\frac{1}{n}} \cos{\frac{t}{n}}\cos{\frac{2 \pi \ell}{2k+1}}\right)}\cos{\left(nz^{\frac{1}{n}} \sin{\frac{t}{n}}\sin{\frac{2\pi \ell}{2k+1}}\right)}.
\end{align*}

\section{The kernel in even dimension}
\setcounter{equation}{0}
\label{kerndimeven}

By combining 
\[
K_{\frac{2}{n}}^{2}(z,w) = \frac{1}{n} \sum_{\ell=0}^{n-1}e^{-i nz^{\frac{1}{n}} \cos{\frac{t+2 \pi \ell}{n}}}
\]
with Proposition \ref{recursionresult}, we obtain the kernel in all even dimensions.
\begin{proposition}
When the dimension $m = 2 k$ is even and $a= 2/n$, the kernel of the radially deformed FT is given by
\[
K_{\frac{2}{n}}^{2k}(z,w) =\left(\frac{2i}{n}\right)^{n(k-1)} \frac{ \left( nk-n\right)!}{(2k-2)!!}z^{-k+1} \partial_w^{k-1}  \left(\frac{1}{n} \sum_{\ell=0}^{n-1}e^{-i nz^{\frac{1}{n}} \cos{\frac{t+2 \pi \ell}{n}}} \right)
\]
with $t= \arccos{w}$.
\end{proposition}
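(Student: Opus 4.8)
The plan is to obtain the formula by specializing the even-dimensional reduction identity of Proposition \ref{recursionresult} to $a = 2/n$ and then substituting the closed form for $K_{2/n}^2$ established in Section \ref{secdim2}. First I would restate Proposition \ref{recursionresult} (even case) with the dimension written as $m = 2k$; here one must keep in mind that the symbol $n$ used in that proposition denotes the half-dimension, whereas in the statement to be proved $n$ is the integer with $a = 2/n$, so a harmless change of letter is needed. The identity reads
\[
K_a^{2k}(z,w) = \frac{e^{i(k-1)\pi/a}\, a^{2(k-1)/a}}{(2k-2)!!}\, \Gamma\!\left(\frac{2k-2+a}{a}\right) z^{-k+1}\, \partial_w^{k-1} K_a^2(z,w).
\]

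The second step is purely computational: evaluate the three $a$-dependent factors at $a = 2/n$. One gets $e^{i(k-1)\pi/a} = e^{i(k-1)n\pi/2} = i^{n(k-1)}$ and $a^{2(k-1)/a} = (2/n)^{n(k-1)}$, whose product is $(2i/n)^{n(k-1)}$; and, since $\frac{2k-2+a}{a} = (k-1)n + 1$, repeated use of the functional equation $\Gamma(x+1) = x\Gamma(x)$ together with $\Gamma(1) = 1$ yields $\Gamma\!\left(\frac{2k-2+a}{a}\right) = \big((k-1)n\big)! = (nk-n)!$. Hence the scalar prefactor collapses to $\left(\frac{2i}{n}\right)^{n(k-1)} (nk-n)!/(2k-2)!!$, which is precisely the constant appearing in the statement.

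Finally I would substitute the dimension-$2$ kernel $K_{2/n}^2(z,w) = \frac{1}{n}\sum_{\ell=0}^{n-1} e^{-inz^{1/n}\cos\frac{t+2\pi\ell}{n}}$, $t = \arccos w$ — this being Theorem \ref{ClosedForm} after the substitution $z \mapsto \frac{2}{a}z^{a/2} = nz^{1/n}$ in the argument of the Bessel functions — into the reduction identity; exchanging the finite sum with the operator $z^{-k+1}\partial_w^{k-1}$ is trivial. This gives the claimed expression. There is no real analytic obstacle here: the whole argument is the constant-chasing of the first two steps, and the only genuine risk is an arithmetic slip in the powers of $i$ and of $2/n$, or a mismatch in the normalization of the Bessel argument between Sections \ref{secdim2} and \ref{kerndimeven}.
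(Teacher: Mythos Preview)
Your proposal is correct and follows exactly the paper's approach: the paper's own argument is the single sentence ``By combining \ldots\ with Proposition \ref{recursionresult}, we obtain the kernel in all even dimensions,'' and you have simply filled in the routine simplification of the constants $e^{i(k-1)\pi/a}$, $a^{2(k-1)/a}$, and $\Gamma\!\left(\frac{2k-2+a}{a}\right)$ at $a=2/n$. Your warning about the clash of the letter $n$ between Proposition \ref{recursionresult} and the present statement is well taken.
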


Note that the iterated derivative in this proposition can be computed explicitly. However, the result is not very nice and does not immediately yield a bound on the kernel for all even dimensions.
Of course, when $n=2$, one reobtains the result of formula (\ref{kernela1}) as can easily be checked.

\end{document}